\newcommand{\F}{\mathbb{F}}
\newcommand{\N}{\mathbb{N}}
\renewcommand{\P}{\mathbb{P}}
\newcommand{\R}{\mathbb{R}}
\newcommand{\C}{\mathbb C}
\newcommand{\h}{{\cal H}}
\newcommand{\T}{{\cal T}}
\newcommand{\U}{{\cal U}}
\newcommand{\Tn}{\T_{n}^{-1}([-1,1])}
\newcommand{\tn}{\mathbb{T}_{n}}
\begin{document}


\title{Inverse Polynomial Images which Consists of Two Jordan Arcs -- An Algebraic Solution\footnote{published in: Journal of Approximation Theory {\bf 148} (2007), 148--157.}}
\author{Klaus Schiefermayr\footnote{University of Applied Sciences Upper Austria, School of Engineering and Environmental Sciences, Stelzhamerstrasse\,23, 4600 Wels, Austria, \textsc{klaus.schiefermayr@fh-wels.at}}}
\date{}
\maketitle

\theoremstyle{plain}
\newtheorem{theorem}{Theorem}
\newtheorem{corollary}{Corollary}
\newtheorem{lemma}{Lemma}
\newtheorem{definition}{Definition}
\theoremstyle{definition}
\newtheorem{remark}{Remark}

\thispagestyle{empty}

\begin{abstract}
Inverse polynomial images of $[-1,1]$, which consists of two Jordan arcs, are characterised by an explicit polynomial equation for the four endpoints of the arcs.
\end{abstract}

\noindent\emph{Mathematics Subject Classification (2000):} 41A10, 30C10

\noindent\emph{Keywords:} Algebraic solution, Inverse polynomial image, Two Jordan arcs, Zolotarev polynomial

\section{Introduction}

Let $\P_{n}$ be the set of polynomials with complex coefficients of degree $n$ and let $P_{n}\in\P_{n}$. Let $P_{n}^{-1}([-1,1])$ be the inverse image of $[-1,1]$ under the polynomial mapping $P_{n}$, i.e.,
\begin{equation}
P_{n}^{-1}([-1,1])=\bigl\{z\in\C:P_{n}(z)\in[-1,1]\bigr\}.
\end{equation}
In general, $P_{n}^{-1}([-1,1])$ consists of $n$ Jordan arcs, on which $P_{n}$ is strictly monotone increasing from $-1$ to $+1$, see \cite{PehInverseImage}. If there is a point $z_{1}\in\C$, for which $P_{n}(z_{1})\in\{-1,1\}$ and $P_{n}'(z_{1})=0$, then two Jordan arcs can be combined into one Jordan arc. This combination of arcs can be seen in a very good way from the inverse image of the classical Chebyshev polynomial $T_{n}(z)=\cos(n\arccos(z))$. The inverse image $T_{n}^{-1}([-1,1])$ is just $[-1,1]$, i.e.\ \emph{one} Jordan arc, since there are $n-1$ points $z_{j}$ with the property $T_{n}(z_{j})\in\{-1,1\}$ and $T_{n}'(z_{j})=0$. Note that $T_{n}$ is, up to a linear transformation, the only polynomial mapping with that property. In this paper, we are interested in polynomials for which the inverse image of $[-1,1]$ consists of two Jordan arcs. This property is equivalent to the existence of a certain quadratic equation for the corresponding polynomial.


\begin{definition}
The set $\{a,b,c,d\}$ of four complex numbers is called a $\tn$-tuple if there exist polynomials $\T_{n}\in\P_{n}$ and $\U_{n-2}\in\P_{n-2}$ such that a quadratic equation (sometimes called Pell-equation or Abel-equation) of the form
\begin{equation}\label{QuEqn}
\T_{n}^2(z)-\h(z)\,\U_{n-2}^2(z)=1
\end{equation}
holds, where
\begin{equation}\label{H}
\h(z):=(z-a)(z-b)(z-c)(z-d).
\end{equation}
Note that $\T_{n}$ and $\U_{n-2}$ are unique up to sign.
\end{definition}


In \cite[Theorem\,3]{PehSch2004}, it was proved that a polynomial $\T_{n}\in\P_{n}$ satisfies equation \eqref{QuEqn} if and only if $\Tn$ consists of \emph{two} Jordan arcs with endpoints $a,b,c,d$.\\ The investigation of quadratic equations of the form \eqref{QuEqn} goes back to Abel\,\cite{Abel} and Chebyshev\,\cite{Chebyshev}. Their work was continued by Zolotarev\,\cite{Zolotarev1,Zolotarev2} and Achieser\,\cite{Achieser1929,Achieser1932}, and in recent times by Peherstorfer\,\cite{Peh_ActaMath,Peh1991,Peh1993,Peh1995}, Lebedev\,\cite{Lebedev} and Peherstorfer and the present author\,\cite{PehSch2004}. For a more detailed history on the subject and some pictures of $\Tn$ in several interesting cases, see \cite{PehSch2004}.

Most of the above cited papers make extensive use of elliptic functions and integrals. Nevertheless, our approach for characterising $\tn$-tuples is purely algebraic and is based on the following result of Peherstorfer and the author\,\cite[Lemma\,2.1]{PehSch1999-1}.


\begin{lemma}[Peherstorfer and Schiefermayr\,\cite{PehSch1999-1}]\hfill{}
\begin{enumerate}
\item Let $n=2m+1$ be an odd degree. The set $\{a,b,c,d\}$ is a $\tn$-tuple if and only if it satisfies the following system of equations:
\begin{equation}\label{odd}
\begin{aligned}
\bigl(x_{1}^{k}+\ldots+x_{m}^{k}\bigr)-\bigl(y_{1}^{k}+\ldots+y_{m-1}^{k}\bigr)
+\tfrac{1}{2}\bigl(-a^{k}-b^{k}-c^{k}+d^{k}\bigr)=0&,\\
k=1,2,\ldots,2m&.
\end{aligned}
\end{equation}
The corresponding polynomial $\T_{n}$ is given by
\begin{align*}
\T_{n}(z)&=1-\frac{2(z-d)\prod_{j=1}^{m}(z-x_{j})^{2}}{(a-d)\prod_{j=1}^{m}(a-x_{j})^{2}}\\
&=-1+\frac{2(z-a)(z-b)(z-c)\prod_{j=1}^{m-1}(z-y_{j})^{2}}{(d-a)(d-b)(d-c)\prod_{j=1}^{m-1}(d-y_{j})^{2}}.
\end{align*}
Note that $\T_{n}(x_{j})=\T_{n}(d)=1$ and $\T_{n}(y_{j})=\T_{n}(a)=\T_{n}(b)=\T_{n}(c)=-1$.
\item Let $n=2m+2$ be an even degree. The set $\{a,b,c,d\}$ is a $\tn$-tuple if and only if it satisfies one of the following two systems of equations:
\begin{equation}\label{even}
\begin{aligned}
\bigl(x_{1}^{k}+\ldots+x_{m}^{k}\bigr)-\bigl(y_{1}^{k}+\ldots+y_{m}^{k}\bigr)
+\tfrac{1}{2}\bigl(-a^{k}-b^{k}+c^{k}+d^{k}\bigr)=0&,\\
k=1,\ldots,2m+1&.
\end{aligned}
\end{equation}
or
\begin{equation}\label{even2}
\begin{aligned}
\bigl(x_{1}^{k}+\ldots+x_{m+1}^{k}\bigr)-\bigl(y_{1}^{k}+\ldots+y_{m-1}^{k}\bigr)+\tfrac{1}{2}\bigl(-a^{k}-b^{k}-c^{k}-d^{k}\bigr)=0&,\\
k=1,\ldots,2m+1&.
\end{aligned}
\end{equation}
The corresponding polynomial $\T_{n}$ for the solution of \eqref{even} is given by
\[
\T_{n}(z)=1-\frac{2(z-c)(z-d)\prod_{j=1}^{m}(z-x_{j})^{2}}{(a-c)(a-d)\prod_{j=1}^{m}(a-x_{j})^{2}}
=-1+\frac{2(z-a)(z-b)\prod_{j=1}^{m}(z-y_{j})^{2}}{(c-a)(c-b)\prod_{j=1}^{m}(c-y_{j})^{2}}.
\]
If $\{a,b,c,d\}$ satisfies system \eqref{even2}, i.e., is a $\tn$-tuple, then $\{a,b,c,d\}$ is also a $\mathbb{T}_{\frac{n}{2}}$-tuple and for the corresponding polynomial we have $\T_{n}(z)=2\T_{\frac{n}{2}}^{2}(z)-1$.
\end{enumerate}
\end{lemma}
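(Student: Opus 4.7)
My plan is to work directly from the Pell factorisation
\[
(\T_n-1)(\T_n+1) \;=\; \h(z)\,\U_{n-2}(z)^2,
\]
and to read off the stated power-sum conditions as the vanishing of coefficients in a Laurent expansion at $z = \infty$. The structural observation I rely on is that $\T_n = 1$ and $\T_n = -1$ are incompatible at a single point, so each zero of the right-hand side belongs with its full multiplicity to \emph{exactly one} of the two factors on the left. Hence each of the (distinct) points $a, b, c, d$ is a simple zero of exactly one of $\T_n \pm 1$, and every zero of $\U_{n-2}$ is a double zero of exactly one of them. Because $\deg(\T_n \pm 1) = n$, the parity of $n$ dictates how $\{a, b, c, d\}$ may split: a $1{+}3$ split when $n = 2m+1$ (giving (i)); a $2{+}2$ or a $0{+}4$ split when $n = 2m+2$ (giving the systems (\ref{even}) and (\ref{even2}) of (ii) respectively).

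I would do the odd case first. Writing
\[
P(z) = (z-d)\prod_{j=1}^{m}(z-x_j)^2, \qquad Q(z) = (z-a)(z-b)(z-c)\prod_{j=1}^{m-1}(z-y_j)^2
\]
(both monic of degree $n$) and $\T_n - 1 = \alpha P$, $\T_n + 1 = \beta Q$, subtracting the two gives $\beta Q - \alpha P = 2$; matching the coefficient of $z^n$ forces $\alpha = \beta$, leaving the single polynomial identity $P - Q = c_0$ with $c_0 = -2/\alpha \ne 0$. The bridge to (\ref{odd}) is the formal expansion at infinity: using $\log(z-r) = \log z - \sum_{k\ge 1} r^k/(kz^k)$ and collecting multiplicities,
\[
\log P(z) - \log Q(z) \;=\; \sum_{k \ge 1} \frac{a^k + b^k + c^k - d^k - 2\sum_j x_j^k + 2\sum_j y_j^k}{k\,z^k}.
\]
The identity $P - Q = c_0$ is equivalent to $\log(P/Q) = O(z^{-n})$, hence to the vanishing of these coefficients for $k = 1, \dots, 2m = n-1$, which up to the factor $-2$ is exactly (\ref{odd}). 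To run the converse, I would start from (\ref{odd}), reverse the log computation to obtain $P - Q = c_0$, evaluate at $z=a$ and $z=d$ (where $Q(a) = 0$ and $P(d) = 0$) to identify $c_0 = P(a) = -Q(d) \ne 0$, set $\T_n := 1 - 2P/P(a)$, use the identity $P = Q + c_0$ to derive the second formula $\T_n = -1 + 2Q/Q(d)$ automatically, and read off $\T_n^2 - 1 = (4/c_0^2)\,P\,Q = (4/c_0^2)\,\h(z)\,\bigl(\prod_j(z-x_j)(z-y_j)\bigr)^2$, which becomes the Pell equation after rescaling $\U_{n-2}$.

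The even case follows the same template. The $2{+}2$ split produces (\ref{even}) by the identical log-expansion argument. The $0{+}4$ subcase behind (\ref{even2}) is the more delicate one: choosing the sign of $\T_n$ so that all four of $a,b,c,d$ are simple zeros of $\T_n - 1$, one has $\T_n - 1 = \alpha\,\h(z)\,W(z)^2$ and $\T_n + 1 = \beta\,R(z)^2$ with $W$, $R$ monic of degrees $m-1$ and $m+1$; the log expansion of $\log(\h W^2) - \log R^2$ produces (\ref{even2}). From $\beta R^2 - \alpha \h W^2 = 2$, i.e.\ $(\beta/2)R^2 - (\alpha/2)\h W^2 = 1$, the pair $(\sqrt{\beta/2}\, R,\, \sqrt{\alpha/2}\, W)$ satisfies a Pell identity at degree $n/2$, so $\{a,b,c,d\}$ is also a $\mathbb{T}_{n/2}$-tuple with $\T_{n/2} = \sqrt{\beta/2}\, R$; substituting into $\T_n = 1 + \alpha \h W^2$ then yields $\T_n = 2\T_{n/2}^2 - 1$. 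I expect this degree-halving passage to be the main technical obstacle, since one must carry the scalar factors $\alpha,\beta$ through the argument, check their compatibility with $\T_{n/2}$ and the accompanying $\U$ being genuine polynomials (which may require choosing the signs of $\T_n$ and of the square roots carefully), and cleanly distinguish the $0{+}4$ from the $2{+}2$ split at the level of systems (\ref{even2}) versus (\ref{even}).
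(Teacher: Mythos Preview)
The paper does not actually prove this lemma; it is quoted from \cite[Lemma\,2.1]{PehSch1999-1} and used as a black box. So there is no in-paper argument to compare your proposal against.

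That said, your plan is essentially correct and is the natural way to recover this result. The core step---factoring $(\T_n-1)(\T_n+1)=\h\,\U_{n-2}^2$, distributing zeros by parity of $n$, and then converting ``$P-Q$ is a nonzero constant'' into the power-sum system via the expansion of $\log(P/Q)$ at infinity---is precisely Newton's identities in disguise, and the degree count forces exactly $n-1$ conditions, matching \eqref{odd} and \eqref{even}. Your treatment of the $0{+}4$ split in the even case is also right: from $\alpha(R^2-\h W^2)=2$ one reads off a degree-$n/2$ Pell pair and the doubling identity $\T_n=2\T_{n/2}^2-1$ follows immediately.

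Two points deserve a little more care than you give them. First, in the converse direction you assert $c_0=P(a)\neq0$, but $P(a)=(a-d)\prod_j(a-x_j)^2$ could vanish if $a$ coincides with $d$ or with some $x_j$; you should either assume the four endpoints are pairwise distinct and argue that $a\in\{x_j\}$ would force $P=Q$ and hence a contradiction on the multiset of roots, or handle the degenerate ``one-arc'' case separately (the paper explicitly allows two of $a,b,c,d$ to coincide). Second, the $1{+}3$ split in the odd case and the choice of which pair goes with $\T_n-1$ in the $2{+}2$ split involve a labeling convention and possibly replacing $\T_n$ by $-\T_n$; this is harmless but should be stated so that the formulas for $\T_n$ come out with the signs in the lemma. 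With those caveats addressed, your outline would constitute a complete proof.
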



Note that the $x_{j}$ and $y_{j}$ of equation \eqref{odd}, \eqref{even}, and \eqref{even2}, are the zeros of the corresponding polynomial $\U_{n-2}$ and are therefore exactly the \emph{extremal points} of the corresponding polynomial $\T_{n}$ on $\Tn$ (which consists of two Jordan arcs). As usual, a point $z_{0}\in{C}$, $C\subseteq\C$ compact, is called an extremal point of $P_{n}\in\P_{n}$ on $C$ if $|P_{n}(z_{0})|=\max_{z\in{C}}|P_{n}(z)|$. Further, we want to remark that the above definition of a $\tn$-tuple also includes the case of one interval (if two of the four points $a,b,c,d$ are equal).


The main purpose of the present paper is to modify the polynomial systems \eqref{odd} and \eqref{even} in the following way: With the help of the recent paper \cite{WuHadjicostis}, in Theorem\,\ref{Theorem-Odd} and Theorem\,\ref{Theorem-Even}, we give \emph{one} polynomial equation in terms of $a,b,c,d$, which is equivalent to \eqref{odd} and \eqref{even}, respectively. In other words, for every degree $n$, we can \emph{explicitly} give a polynomial in four variables $p(a,b,c,d)$, whose zeros $\{a,b,c,d\}$ are $\tn$-tuples. Moreover, a simple equation for computing the extremal points $x_{j}$ and $y_{j}$ is derived. Algebraic solutions of the quadratic equation \eqref{QuEqn} with the help of Jacobi's elliptic functions can be found in \cite{SoYu1993,SoYu1995} (in the real case), and \cite[Section\,4]{PehSch2004}. In \cite[Section\,5]{Peh_ActaMath}, an algebraic solution (in the real case) is given with the help of orthogonal polynomials. In \cite[Chapter\,7]{PehSch1999-2}, an algebraic solution was found by simplifying the above system of equations with the help of {\sc Gr\"obner}-Basis.


The paper is organised as follows. In section\,2, the fundamental lemma based on \cite{WuHadjicostis} is given, from which the simplifications of \eqref{odd} and \eqref{even}, proved in section\,3, can be deduced. Moreover, the maximum number of $\tn$-tuples is given explicitly assuming that $3$ of the $4$ points $a,b,c,d$ are fixed. In section\,4.1, the polynomial equations for $a,b,c,d$ and for $x_{j}$ and $y_{j}$ are explicitly written down for the smallest degrees $n\in\{2,3,4\}$. Finally, a brief look at the special case of Zolotarev polynomials is taken in Section\,4.2.

\section{Auxiliaries}

Let us define $F_{0}:=1$ and, for $k=1,2,\ldots$,
\begin{equation}\label{Fk}
F_{k}\equiv{F}_{k}(s_{1},s_{2},\ldots,s_{k}):=\frac{(-1)^k}{k!}\det
\begin{pmatrix}
s_{1}&1&0&0&\hdots&0&0\\
s_{2}&s_{1}&2&0&\hdots&0&0\\
s_{3}&s_{2}&s_{1}&3&\hdots&0&0\\
\vdots&\vdots&\vdots&\vdots&\hdots&\vdots&\vdots\\
s_{k-1}&s_{k-2}&s_{k-3}&s_{k-4}&\hdots&s_{1}&k-1\\
s_{k}&s_{k-1}&s_{k-2}&s_{k-3}&\hdots&s_{2}&s_{1}
\end{pmatrix}.
\end{equation}
For negative indices $k=-1,-2,-3,\ldots$, we define $F_k:=0$.

The next lemma, which is fundamental for our considerations, can be extracted from \cite{WuHadjicostis}.


\begin{lemma}\label{Lemma-WuHa}
For given $s_{1},s_{2},\ldots,s_{\nu+\mu}\in\C$, consider the system of equations
\begin{equation}\label{eqn-uv}
\bigl(u_{1}^{k}+\ldots+u_{\nu}^{k}\bigr)-\bigl(v_{1}^{k}+\ldots+v_{\mu}^{k}\bigr)=s_{k},\quad k=1,2,\ldots,\nu+\mu.
\end{equation}
If $\{u_{1},\ldots,u_{\nu},v_{1},\ldots,v_{\mu}\}$ is a nontrivial solution of \eqref{eqn-uv}, i.e., the sets $\{u_{1},\ldots,u_{\nu}\}$ and $\{v_{1},\ldots,v_{\mu}\}$ are disjoint, then this solution is unique (up to permutations of the $u_{j}$ and $v_{j}$) and the values $v_{1},v_{2},\ldots,v_{\mu}$ are exactly the solution of the equation
\begin{equation}\label{eqn-v}
v^{\mu}+\Lambda_{1}v^{\mu-1}+\Lambda_{2}v^{\mu-2}+\ldots+\Lambda_{\mu-1}v+\Lambda_{\mu}=0,
\end{equation}
where $\Lambda_{1},\Lambda_{2},\ldots,\Lambda_{\mu}$ are the solution of the following regular linear system of equations:
\begin{equation}\label{eqn-lambda}
\begin{aligned}
\sum_{i=1}^{\mu}F_{\nu+1-i}\Lambda_{i}&=-F_{\nu+1}\\
\sum_{i=1}^{\mu}F_{\nu+2-i}\Lambda_{i}&=-F_{\nu+2}\\
&\vdots\\
\sum_{i=1}^{\mu}F_{\nu+\mu-i}\Lambda_{i}&=-F_{\nu+\mu}
\end{aligned}
\end{equation}
By Cramer's rule, the solution $\Lambda_{1},\Lambda_{2},\ldots,\Lambda_{\mu}$ of system \eqref{eqn-lambda} may be written in the form (note that system \eqref{eqn-lambda} is regular and therefore $\det\F\neq0$)
\begin{equation}\label{Lambda}
\Lambda_{i}=\frac{\det\F_{i}}{\det\F}, \qquad i=1,2,\ldots,\mu,
\end{equation}
where
\begin{equation}\label{FF}
\F:=\begin{pmatrix}
F_{\nu}&F_{\nu-1}&F_{\nu-2}&\hdots&F_{\nu+1-\mu}\\
F_{\nu+1}&F_{\nu}&F_{\nu-1}&\hdots&F_{\nu+2-\mu}\\
F_{\nu+2}&F_{\nu+1}&F_{\nu}&\hdots&F_{\nu+3-\mu}\\
\vdots&\vdots&\vdots&\hdots&\vdots\\
F_{\nu+\mu-1}&F_{\nu+\mu-2}&F_{\nu+\mu-3}&\hdots&F_{\nu}
\end{pmatrix}\in\R_{\mu}^{\mu}
\end{equation}
and
\begin{equation}\label{FFi}
\F_{i}:=\F \text{ and the $i$-th column of $\F$ is replaced by }
\begin{pmatrix}-F_{\nu+1}\\-F_{\nu+2}\\-F_{\nu+3}\\\vdots\\-F_{\nu+\mu}\end{pmatrix}.
\end{equation}
By \eqref{Lambda}, equation \eqref{eqn-v} may be written in the form
\begin{equation}
v^{\mu}\det\F+v^{\mu-1}\det\F_{1}+v^{\mu-2}\det\F_{2}
+\ldots+v\det\F_{\mu-1}+\det\F_{\mu}=0.
\end{equation}
\end{lemma}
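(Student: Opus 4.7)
The plan is to recast \eqref{eqn-uv} as a Pad\'e-approximation problem for a rational generating function and to read off the linear system \eqref{eqn-lambda} by matching Taylor coefficients. The central object will be the formal power series
\[
f(z):=\exp\Bigl(-\sum_{k\geq1}\tfrac{s_{k}}{k}z^{k}\Bigr)=\sum_{k\geq 0}\tilde{F}_{k}z^{k}\in\C[[z]].
\]
Differentiating gives $f'(z)=-f(z)\sum_{k\geq1}s_{k}z^{k-1}$; comparing coefficients of $z^{k-1}$ yields the triangular recursion $k\tilde{F}_{k}=-\sum_{j=0}^{k-1}\tilde{F}_{j}\,s_{k-j}$ with $\tilde{F}_{0}=1$. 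Solving this triangular system by Cramer's rule reproduces exactly the determinantal expression in \eqref{Fk}, so $\tilde{F}_{k}=F_{k}(s_{1},\ldots,s_{k})$.

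Given a nontrivial solution of \eqref{eqn-uv}, I would then introduce the rational function
\[
R(z):=\frac{\prod_{i=1}^{\nu}(1-u_{i}z)}{\prod_{j=1}^{\mu}(1-v_{j}z)}.
\]
Its logarithmic derivative is $R'(z)/R(z)=-\sum_{k\geq1}\bigl(\sum_{i}u_{i}^{k}-\sum_{j}v_{j}^{k}\bigr)z^{k-1}$, which agrees with $f'(z)/f(z)$ through order $z^{\nu+\mu-1}$ by \eqref{eqn-uv}. Since $F_{k}$ depends only on $s_{1},\ldots,s_{k}$, the Taylor coefficients of $R$ coincide with $F_{0},\ldots,F_{\nu+\mu}$. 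Writing $V^{*}(z):=\prod_{j=1}^{\mu}(1-v_{j}z)=\sum_{k=0}^{\mu}\Lambda_{k}z^{k}$, the reciprocal-polynomial relation $V^{*}(z)=z^{\mu}\prod_{j}(z^{-1}-v_{j})$ identifies these $\Lambda_{k}$ with the coefficients of the polynomial in \eqref{eqn-v} (so $\Lambda_{0}=1$). The identity $R(z)V^{*}(z)=\prod_{i=1}^{\nu}(1-u_{i}z)$ has a polynomial right-hand side of degree $\nu$ in $z$, hence the coefficient of $z^{\nu+j}$ on the left must vanish for $j=1,\ldots,\mu$:
\[
F_{\nu+j}+\sum_{k=1}^{\mu}\Lambda_{k}F_{\nu+j-k}=0,\qquad j=1,\ldots,\mu,
\]
which is precisely \eqref{eqn-lambda}, and Cramer's rule then yields \eqref{Lambda}.

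For uniqueness and the regularity $\det\F\neq0$, I would invoke a standard Pad\'e argument: if two rational functions $U_{a}/V_{a}^{*}$ and $U_{b}/V_{b}^{*}$ of type $(\nu,\mu)$ with coprime numerator and denominator share their first $\nu+\mu+1$ Taylor coefficients, then the polynomial $U_{a}V_{b}^{*}-U_{b}V_{a}^{*}$ has degree at most $\nu+\mu$ and vanishes to order at least $\nu+\mu+1$ at $0$, hence vanishes identically, and coprimality forces the two fractions to agree. Applied to two hypothetical nontrivial solutions of \eqref{eqn-uv}, this forces $\{u_{i}\}$ and $\{v_{j}\}$ to coincide up to permutation, and in particular the vector $(\Lambda_{1},\ldots,\Lambda_{\mu})$ is the unique solution of \eqref{eqn-lambda}, so $\F$ must be invertible. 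I expect the main obstacle to be the first step, namely checking that the recursion for $\tilde{F}_{k}$ unwinds to the determinant \eqref{Fk} with the correct $(-1)^{k}/k!$ prefactor and superdiagonal entries $1,2,\ldots,k-1$; this is where all the sign and factorial bookkeeping sits, whereas the subsequent Pad\'e-style matching and uniqueness arguments are essentially routine once the generating function is in hand.
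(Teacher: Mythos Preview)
The paper does not actually prove this lemma: it is stated as ``extracted from \cite{WuHadjicostis}'' and used as a black box. So there is no in-paper argument to compare against, and your proposal stands on its own.

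Your approach is sound and is in fact the natural one. The identification $\tilde F_k=F_k$ is exactly Newton's identity written in determinantal form (elementary symmetric functions of a ``virtual'' alphabet in terms of its power sums), and the expansion of the determinant \eqref{Fk} along the last row reproduces your recursion $k\tilde F_k=-\sum_{j=0}^{k-1}\tilde F_j s_{k-j}$; the factor $(-1)^k/k!$ and the superdiagonal $1,2,\ldots,k-1$ fall out correctly. The Pad\'e step is also correct: since $R(0)=f(0)=1$ and the logarithmic derivatives agree through $z^{\nu+\mu-1}$, the Taylor coefficients of $R$ agree with $F_0,\ldots,F_{\nu+\mu}$, and matching coefficients of $z^{\nu+1},\ldots,z^{\nu+\mu}$ in $R(z)V^*(z)=\prod_i(1-u_iz)$ yields \eqref{eqn-lambda} verbatim. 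Your uniqueness argument via $U_aV_b^*-U_bV_a^*=0$ is the standard one and works because the disjointness hypothesis makes both fractions reduced.

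One point deserves a little more care than your last sentence gives it: uniqueness of the \emph{nontrivial solution} to \eqref{eqn-uv} does not by itself force the linear system \eqref{eqn-lambda} to be regular---a singular system could still have the correct $(\Lambda_1,\ldots,\Lambda_\mu)$ among infinitely many solutions. What you need is: if $\F\Lambda'=0$ with $\Lambda'\neq0$, set $W^*(z)=\sum_{i=1}^\mu\Lambda_i' z^i$; then $R(z)W^*(z)$ has vanishing coefficients in degrees $\nu+1,\ldots,\nu+\mu$, so $UW^*-PV^*$ is a polynomial of degree $\le\nu+\mu$ that is $O(z^{\nu+\mu+1})$, hence zero. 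Coprimality of $U$ and $V^*$ then forces $V^*\mid W^*$, but $\deg W^*\le\mu=\deg V^*$ and $W^*(0)=0\neq V^*(0)$, so $W^*=0$. This is routine Pad\'e normality, but it is the step that actually delivers $\det\F\neq0$, and it should be spelled out rather than inferred from uniqueness of the original solution.
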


\section{Main Results}


Let $n=2m+1$ be an odd degree. Starting point is system of equations \eqref{odd}, which may be written in the form
\begin{equation}\label{odd2}
\bigl(y_{1}^{k}+\ldots+y_{m-1}^{k}\bigr)
-\bigl(x_{1}^{k}+\ldots+x_{m}^{k}+d^{k}\bigr)=s_{k}, \quad
k=1,2,\ldots,2m,
\end{equation}
where
\begin{equation}\label{odd-sk1}
s_{k}:=\tfrac{1}{2}\bigl(-a^{k}-b^{k}-c^{k}-d^{k}\bigr), \qquad k=1,2,\ldots,2m.
\end{equation}
Then, by Lemma\,\ref{Lemma-WuHa}, the above polynomial system reduces to
\[
d^{m+1}\det\F+d^{m}\det\F_{1}+d^{m-1}\det\F_{2}+\ldots+d\det\F_{m}+\det\F_{m+1}=0,
\]
where $s_{k}$, $F_{k}$, $\F$, and $\F_{i}$ is defined in \eqref{odd-sk1}, \eqref{Fk}, \eqref{FF}, and \eqref{FFi}, respectively, and $\nu=m-1$ and $\mu=m+1$. Note that if $\{y_{1},\ldots,y_{m-1},x_{1},\ldots,x_{m},d\}$ is a solution of \eqref{odd2} then the sets $\{y_{1},\ldots,y_{m-1}\}$ and $\{x_{1},\ldots,x_{m},d\}$ are disjoint, thus, by Lemma\,\ref{Lemma-WuHa}, the corresponding linear system \eqref{eqn-lambda} is regular and $\det\F\neq0$.


In order to get a polynomial equation for the $y_{i}$, we write system \eqref{odd} in the form
\begin{equation}
\bigl(x_{1}^{k}+\ldots+x_{m}^{k}+d^{k}\bigr)
-\bigl(y_{1}^{k}+\ldots+y_{m-1}^{k}\bigr)=s_{k}, \quad
k=1,2,\ldots,2m,
\end{equation}
where
\begin{equation}\label{odd-sk2}
s_{k}:=\tfrac{1}{2}\bigl(a^{k}+b^{k}+c^{k}+d^{k}\bigr), \qquad k=1,2,\ldots,2m.
\end{equation}
Then, again by Lemma\,\ref{Lemma-WuHa}, the above polynomial system reduces to
\[
y^{m-1}\det\F+y^{m-2}\det\F_{1}+y^{m-3}\det\F_{2}+\ldots+y\det\F_{m-2}+\det\F_{m-1}=0,
\]
where $s_{k}$, $F_{k}$, $\F$, and $\F_{i}$ is defined in \eqref{odd-sk2}, \eqref{Fk}, \eqref{FF}, and \eqref{FFi}, respectively, and $\nu=m+1$, $\mu=m-1$. By the same argument as above, $\det\F\neq0$.


For a polynomial equation for the $x_{i}$, we write system \eqref{odd} in the form
\begin{equation}
\bigl(y_{1}^{k}+\ldots+y_{m-1}^{k}+a^{k}\bigr)
-\bigl(x_{1}^{k}+\ldots+x_{m}^{k}\bigr)=s_{k}, \quad
k=1,2,\ldots,2m,
\end{equation}
where
\begin{equation}\label{odd-sk3}
s_{k}:=\tfrac{1}{2}\bigl(a^{k}-b^{k}-c^{k}+d^{k}\bigr), \qquad k=1,2,\ldots,2m.
\end{equation}
Then, again by Lemma\,\ref{Lemma-WuHa}, the above polynomial system reduces to
\[
x^{m}\det\F+x^{m-1}\det\F_{1}+x^{m-2}\det\F_{2}+\ldots+x\det\F_{m-1}+\det\F_{m}=0,
\]
where $s_{k}$, $F_{k}$, $\F$, and $\F_{i}$ is defined in \eqref{odd-sk3}, \eqref{Fk}, \eqref{FF}, and \eqref{FFi}, respectively, and $\nu=m$, $\mu=m$. By the same argument as above, $\det\F\neq0$.

We collect the above results in the following theorem.


\begin{theorem}\label{Theorem-Odd}
Let $n=2m+1$.
\begin{enumerate}
\item The set $\{a,b,c,d\}$ is a $\tn$-tuple if and only if $a,b,c,d$ satisfies the polynomial equation
\begin{equation}\label{odd-abcd}
p:=d^{m+1}\det\F+d^{m}\det\F_{1}+d^{m-1}\det\F_{2}+\ldots+d\det\F_{m}+\det\F_{m+1}=0,
\end{equation}
where $s_{k}$, $F_{k}$, $\F$, and $\F_{i}$ is defined in \eqref{odd-sk1}, \eqref{Fk}, \eqref{FF}, and \eqref{FFi}, respectively, and $\nu=m-1$, $\mu=m+1$.
\item The values $y_{1},y_{2},\ldots,y_{m-1}$ are exactly the zeros of the polynomial
\begin{equation}\label{odd-y}
y^{m-1}\det\F+y^{m-2}\det\F_{1}+y^{m-3}\det\F_{2}+\ldots+y\det\F_{m-2}+\det\F_{m-1},
\end{equation}
where $s_{k}$, $F_{k}$, $\F$, and $\F_{i}$ is defined in \eqref{odd-sk2}, \eqref{Fk}, \eqref{FF}, and \eqref{FFi}, respectively, and $\nu=m+1$, $\mu=m-1$.
\item The values $x_{1},x_{2},\ldots,x_{m}$ are exactly the zeros of the polynomial
\begin{equation}\label{odd-x}
x^{m}\det\F+x^{m-1}\det\F_{1}+x^{m-2}\det\F_{2}+\ldots+x\det\F_{m-1}+\det\F_{m},
\end{equation}
where $s_{k}$, $F_{k}$, $\F$, and $\F_{i}$ is defined in \eqref{odd-sk3}, \eqref{Fk}, \eqref{FF}, and \eqref{FFi}, respectively, and $\nu=m$, $\mu=m$.
\end{enumerate}
\end{theorem}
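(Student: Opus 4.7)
The strategy is to prove each of the three parts by a separate application of Lemma \ref{Lemma-WuHa} to the defining system \eqref{odd}, in each case permuting which of the unknowns plays the role of $u$-variables and which the $v$-variables. The preceding text already carries out the algebraic rewriting in the three needed forms; my task is to package these into a clean "if and only if" argument and to verify the hypotheses of Lemma \ref{Lemma-WuHa} in each case.

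For the "only if" direction of (i), I would assume $\{a,b,c,d\}$ is a $\tn$-tuple and pass to the rearrangement \eqref{odd2}, taking the $y_j$ as $u$-variables and $(x_1,\ldots,x_m,d)$ as $v$-variables, with $\nu=m-1$, $\mu=m+1$, and $s_k$ as in \eqref{odd-sk1}. The disjointness hypothesis of Lemma \ref{Lemma-WuHa} is immediate: the explicit formulas of Lemma 1 give $\T_n(y_j)=-1$ while $\T_n(x_j)=\T_n(d)=+1$, so no $y_j$ can coincide with any $x_i$ or with $d$. Lemma \ref{Lemma-WuHa} then supplies the regular linear system \eqref{eqn-lambda} and the degree-$(m+1)$ polynomial \eqref{eqn-v} vanishing at every $v_j$; clearing denominators via \eqref{Lambda} and specialising $v=d$ yields \eqref{odd-abcd}. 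For the converse, assume \eqref{odd-abcd} with $\det\F\neq 0$; let $x_1,\ldots,x_m$ be the remaining roots of the degree-$(m+1)$ polynomial obtained by dividing \eqref{odd-abcd} by $(v-d)\det\F$, and recover $y_1,\ldots,y_{m-1}$ from the Padé-type content of Lemma \ref{Lemma-WuHa}: the formal series $\sum_{k\geq 0}F_k t^k$ equals the rational function $\prod_i(1-y_i t)\big/\bigl((1-dt)\prod_j(1-x_j t)\bigr)$, since the Hankel conditions \eqref{eqn-lambda} force the lowest nontrivial coefficients of $f(t)\cdot(1-dt)\prod_j(1-x_jt)$ to vanish. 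The $y_i$ are then read off as the zeros of the numerator, and together with the $x_j$ they furnish a solution of \eqref{odd2}, equivalently of \eqref{odd}, so $\{a,b,c,d\}$ is a $\tn$-tuple.

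Parts (ii) and (iii) follow by the same template, now taking the $y_j$ (resp.\ the $x_j$) as the $v$-variables. For (ii), use $\nu=m+1$, $\mu=m-1$, $u$-variables $(x_1,\ldots,x_m,d)$, and $s_k$ as in \eqref{odd-sk2}; for (iii), use $\nu=\mu=m$, $u$-variables $(y_1,\ldots,y_{m-1},a)$, and $s_k$ as in \eqref{odd-sk3}. Disjointness in each case again reduces to the $\pm 1$ values of $\T_n$ at the relevant points (for (iii), note that $\T_n(a)=-1$ by Lemma 1, while $\T_n(x_j)=+1$), and Lemma \ref{Lemma-WuHa} identifies the extremal values as the roots of the polynomial equation \eqref{eqn-v}, giving \eqref{odd-y} and \eqref{odd-x}.

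I expect the main obstacle to be the converse direction in (i): Lemma \ref{Lemma-WuHa} is phrased only as uniqueness given a nontrivial solution, so turning the polynomial equation \eqref{odd-abcd} into an existence statement requires unpacking the generating-function content behind the Hankel system \eqref{eqn-lambda} and verifying that the resulting $x_j,y_j$ satisfy all $2m$ equations of \eqref{odd}, not merely the subset encoded in \eqref{eqn-v}. A closely related subtlety is the condition $\det\F\neq 0$, which follows for the "only if" direction from disjointness via Lemma \ref{Lemma-WuHa}, but must be imposed or argued separately for the converse; a continuity or limiting argument should handle the remaining degenerate configurations.
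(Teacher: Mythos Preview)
Your approach is exactly the paper's: apply Lemma~\ref{Lemma-WuHa} to the three rearrangements of \eqref{odd} already written out in the text preceding the theorem, with the same choices of $\nu,\mu$ and $s_k$ in each case. You are in fact more careful than the paper, which simply asserts that the system ``reduces to'' the single equation without separately arguing the converse direction or justifying disjointness beyond a one-line remark; your identification of the converse and the $\det\F\neq 0$ issue as the delicate points is accurate, and your generating-function sketch for recovering the $y_j$ (via $\tilde F_m=\cdots=\tilde F_{2m}=0$ and Newton's identities) is a correct way to close that gap.
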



\begin{corollary}\label{Corollary-Odd}
Let $n=2m+1$.
\begin{enumerate}
\item The polynomial $p\equiv{p}(a,b,c,d)$ in Theorem\,\ref{Theorem-Odd}(i) is a homogeneous polynomial of $a,b,c,d$ with rational coefficients and degree $m^{2}+m=(n^{2}-1)/4$.
\item Let $3$ of the $4$ points $a,b,c,d\in\C$ be fixed, then there exist at most $(n^{2}-1)$ $\tn$-tuples containing these $3$ points.
\end{enumerate}
\end{corollary}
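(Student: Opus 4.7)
The plan is to establish (i) by a careful degree count through the determinants $\det\F$ and $\det\F_i$, and then to deduce (ii) by combining the partial symmetry of $p$ in $a,b,c$ with an enumeration of the four ways the unknown fourth point may be labeled.

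For (i), I would first observe that $s_k=-\tfrac{1}{2}(a^k+b^k+c^k+d^k)$ in \eqref{odd-sk1} is homogeneous of degree $k$ in $a,b,c,d$ with rational coefficients. Assigning the variable $s_j$ weight $j$, every monomial contributing to the determinant in \eqref{Fk} has total weight $k$; together with the $\tfrac{1}{k!}$ prefactor this shows that $F_k$ is homogeneous of degree $k$ in $a,b,c,d$ with rational coefficients. The entry in position $(i,j)$ of the matrix $\F$ from \eqref{FF} is $F_{\nu+i-j}$, of degree $\nu+i-j$. The Leibniz expansion
\[
\det\F=\sum_{\sigma\in S_\mu}\operatorname{sgn}(\sigma)\prod_{i=1}^\mu F_{\nu+i-\sigma(i)}
\]
is therefore homogeneous of degree $\sum_i(\nu+i-\sigma(i))=\mu\nu$, independent of $\sigma$. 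An identical accounting for $\det\F_i$, in which the $i$-th column is replaced by $(-F_{\nu+1},\dots,-F_{\nu+\mu})^\top$, yields $\det\F_i$ homogeneous of degree $\mu\nu+i$. With $\nu=m-1$, $\mu=m+1$, and $\F_0:=\F$, every term $d^{m+1-i}\det\F_i$ of $p$ in \eqref{odd-abcd} has degree $(m+1-i)+((m-1)(m+1)+i)=m^2+m=(n^2-1)/4$, establishing homogeneity together with the rational-coefficient claim.

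For (ii), the crucial observation is that $p$ is symmetric in $a,b,c$ (though not in all four variables): the $s_k$, and hence the $F_k$, $\det\F$, and each $\det\F_i$, are symmetric in $a,b,c,d$, and the only asymmetric factor appearing in $p$ is $d^{m+1-i}$. Fix three points $p_1,p_2,p_3\in\C$ and let $z$ denote the free fourth point. By Theorem\,\ref{Theorem-Odd}(i), $\{p_1,p_2,p_3,z\}$ is a $\tn$-tuple if and only if some labeling of these four points as $(a,b,c,d)$ satisfies $p=0$. Modulo the partial symmetry $a\leftrightarrow b\leftrightarrow c$ there are exactly four essentially distinct labelings, one for each choice of which of the four points plays the role of $d$; each such labeling produces a polynomial equation in $z$ whose degree is at most $m^2+m$ (since the total degree from (i) majorises the degree in any single variable), so the total number of admissible $z$ is at most $4(m^2+m)=n^2-1$.

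The main technical step is the weighted-degree accounting for $\det\F$ and $\det\F_i$ in (i); once the homogeneity and partial symmetry of $p$ are in hand, (ii) reduces to four applications of the fundamental theorem of algebra. A minor subtlety in (ii) is that one should exclude the degenerate scenario in which one of the four polynomial equations in $z$ vanishes identically for the prescribed $p_1,p_2,p_3$, but under the literal \emph{at most} reading of the corollary this case may simply be set aside.
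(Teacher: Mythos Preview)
Your proof is correct and follows essentially the same route as the paper's: you track homogeneity through $s_k\to F_k\to\det\F,\det\F_i$ to obtain degree $m^2+m$, and then use the $a\leftrightarrow b\leftrightarrow c$ symmetry of $p$ to reduce the labeling count to four, exactly as the paper does. Your treatment is in fact more detailed---the Leibniz-expansion degree count and the explicit symmetry observation make precise what the paper only states---and your closing caveat about the possibility of an identically vanishing univariate specialisation is an honest acknowledgment of a point the paper's proof also leaves unaddressed (though note that ``at most'' would actually fail, not be vacuously satisfied, in that degenerate scenario).
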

\begin{proof}
\begin{enumerate}
\item By the definitions \eqref{odd-sk1}, \eqref{Fk}, \eqref{FF}, and \eqref{FFi}, the following statements concerning the degree of $p$ hold (note that $\nu=m-1$, $\mu=m+1$):
\begin{itemize}
\item $F_k$ is a homogeneous polynomial of $a,b,c,d$ with degree $k$.
\item $\det\F$ is a homogeneous polynomial of $a,b,c,d$ with degree $m^{2}-1$.
\item $\det\F_{i}$ is a homogeneous polynomial of $a,b,c,d$ with degree $m^{2}-1+i$.
\end{itemize}
From these statements, the assertion follows.
\item By the special form of equation \eqref{odd}, there are $4$ different possibilities to fix $3$ of the $4$ points $a,b,c,d$ in \eqref{odd}. These $4$ possibilities multiplied with the degree $(n^{2}-1)/4$ of the homogeneous polynomial $p(a,b,c,d)$ gives the maximum number of different solutions.
\end{enumerate}
\end{proof}

Finally, we give the analogous results for even degree. Since the proofs run along the same lines as those for odd degree, we omit them.


\begin{theorem}\label{Theorem-Even}
Let $n=2m+2$.
\begin{enumerate}
\item The set $\{a,b,c,d\}$ is a $\tn$-tuple but not a $\mathbb{T}_{\frac{n}{2}}$-tuple if and only if $a,b,c,d$ satisfies the polynomial equation
\begin{equation}\label{even-abcd}
p:=a^{m+1}\det\F+a^{m}\det\F_{1}+a^{m-1}\det\F_{2}+\ldots+a\det\F_{m}+\det\F_{m+1}=0,
\end{equation}
where
\begin{equation}\label{even-sk1}
s_{k}:=\tfrac{1}{2}\bigl(-a^{k}+b^{k}-c^{k}-d^{k}\bigr), \qquad k=1,2,\ldots,2m+1,
\end{equation}
and $F_{k}$, $\F$, and $\F_{i}$ is defined in \eqref{Fk}, \eqref{FF}, and \eqref{FFi}, respectively, and $\nu=m$, $\mu=m+1$.
\item The values $y_{1},y_{2},\ldots,y_{m}$ are exactly the zeros of the polynomial
\begin{equation}\label{even-y}
y^{m}\det\F+y^{m-1}\det\F_{1}+y^{m-2}\det\F_{2}+\ldots+y\det\F_{m-1}+\det\F_{m},
\end{equation}
where
\begin{equation}\label{even-sk2}
s_{k}:=\tfrac{1}{2}\bigl(a^{k}+b^{k}+c^{k}-d^{k}\bigr), \qquad k=1,2,\ldots,2m+1,
\end{equation}
and $F_{k}$, $\F$, and $\F_{i}$ is defined in \eqref{Fk}, \eqref{FF}, and \eqref{FFi}, respectively, and $\nu=m+1$, $\mu=m$.
\item The values $x_{1},x_{2},\ldots,x_{m}$ are exactly the zeros of the polynomial
\begin{equation}\label{even-x}
x^{m}\det\F+x^{m-1}\det\F_{1}+x^{m-2}\det\F_{2}+\ldots+x\det\F_{m-1}+\det\F_{m},
\end{equation}
where
\begin{equation}\label{even-sk3}
s_{k}:=\tfrac{1}{2}\bigl(a^{k}-b^{k}+c^{k}+d^{k}\bigr), \qquad k=1,2,\ldots,2m+1,
\end{equation}
and $F_{k}$, $\F$, and $\F_{i}$ is defined in \eqref{Fk}, \eqref{FF}, and \eqref{FFi}, respectively, and $\nu=m+1$, $\mu=m$.
\end{enumerate}
\end{theorem}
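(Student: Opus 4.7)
The plan is to adapt the three-step argument that established Theorem~\ref{Theorem-Odd} to the even-degree system \eqref{even}, simply choosing a different rearrangement of \eqref{even} for each of the three parts. In each case I rewrite \eqref{even} in the standard form \eqref{eqn-uv} of Lemma~\ref{Lemma-WuHa} by adding exactly one of the endpoints $a,b,c,d$ to one of the two sides, identify the parameters $\nu,\mu$, and read off the polynomial equation \eqref{eqn-v} for the remaining unknown ($a$ in part (i), $y$ in part (ii), $x$ in part (iii)). The Cramer-rule form \eqref{Lambda} then yields the explicit determinantal polynomial stated in the theorem.

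Concretely, for part (i) I move $a$ into the second sum, rewriting \eqref{even} as
\[
(x_1^k+\cdots+x_m^k)-(a^k+y_1^k+\cdots+y_m^k)=\tfrac{1}{2}(-a^k+b^k-c^k-d^k)=s_k,
\]
which is \eqref{eqn-uv} with $\nu=m$ and $\mu=m+1$; Lemma~\ref{Lemma-WuHa} then shows that $a$, together with $y_1,\ldots,y_m$, are the roots of the degree-$(m+1)$ polynomial in \eqref{even-abcd}. For part (ii) I attach $c$ to the $x$-side,
\[
(c^k+x_1^k+\cdots+x_m^k)-(y_1^k+\cdots+y_m^k)=\tfrac{1}{2}(a^k+b^k+c^k-d^k)=s_k,
\]
so that $\nu=m+1$, $\mu=m$, and Lemma~\ref{Lemma-WuHa} singles out the $y_j$ as the zeros of \eqref{even-y}. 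For part (iii) I attach $a$ to the $y$-side,
\[
(a^k+y_1^k+\cdots+y_m^k)-(x_1^k+\cdots+x_m^k)=\tfrac{1}{2}(a^k-b^k+c^k+d^k)=s_k,
\]
again with $\nu=m+1$, $\mu=m$, and Lemma~\ref{Lemma-WuHa} gives the $x_j$ as the zeros of \eqref{even-x}. The arithmetic identities match the $s_k$ in \eqref{even-sk1}, \eqref{even-sk2}, \eqref{even-sk3} verbatim.

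The one point requiring genuine verification, and the main obstacle, is the nontriviality/disjointness hypothesis of Lemma~\ref{Lemma-WuHa} that guarantees uniqueness and $\det\F\neq 0$. In a $\tn$-tuple the sets $\{x_j\}$ (where $\T_n=1$) and $\{y_j\}$ (where $\T_n=-1$) are automatically disjoint; the remaining potential coincidences are of an endpoint with an extremal point of the opposite sign. Inspection of the polynomial representation in Lemma~1(ii) shows that such a coincidence forces $\T_n$ to factor as $2\T_{n/2}^2-1$, i.e., forces $\{a,b,c,d\}$ to be a $\mathbb{T}_{n/2}$-tuple. Therefore disjointness holds in all three rearrangements precisely under the hypothesis ``$\tn$-tuple but not $\mathbb{T}_{n/2}$-tuple'' of part~(i). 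The converse direction—that a nontrivial solution of the rearranged system forces \eqref{even}, and hence $\tn$-tupleness—follows at once from the uniqueness clause of Lemma~\ref{Lemma-WuHa}, exactly as in the odd-degree proof.
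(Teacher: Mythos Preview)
Your argument is correct and is exactly the approach the paper intends (it merely says ``the proofs run along the same lines as those for odd degree''): the three rearrangements you write down are the right ones, the resulting $s_k$ match \eqref{even-sk1}--\eqref{even-sk3} verbatim, and the $(\nu,\mu)$ pairs are as stated.

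One clarification is in order, however. In each of your three rearrangements the two sets are separated by the value $\T_n$ takes there: one set consists entirely of points where $\T_n=+1$ (the $x_j$ together with $c$ or $d$) and the other of points where $\T_n=-1$ (the $y_j$ together with $a$ or $b$); hence disjointness is automatic whenever \eqref{even} holds, and no coincidence ``of an endpoint with an extremal point of the opposite sign'' can occur. The hypothesis ``not a $\mathbb{T}_{n/2}$-tuple'' in part~(i) is therefore \emph{not} what guarantees disjointness. Its actual role is upstream: by Lemma~1(ii) a $\tn$-tuple satisfies either \eqref{even} or \eqref{even2}, and satisfying \eqref{even2} forces the tuple to be a $\mathbb{T}_{n/2}$-tuple; excluding that case is precisely what pins you to system \eqref{even}, which is the system your rearrangements manipulate. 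With this correction your proof goes through unchanged.
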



\begin{corollary}
Let $n=2m+2$.
\begin{enumerate}
\item The polynomial $p\equiv{p}(a,b,c,d)$ in Theorem\,\ref{Theorem-Even}(i) is a homogeneous polynomial of $a,b,c,d$ with rational coefficients and degree $(m+1)^{2}=n^{2}/4$.
\item Let $3$ of the $4$ points $a,b,c,d\in\C$ be fixed, then there exist at most $3n^{2}/4$ $\tn$-tuples containing these $3$ points.
\end{enumerate}
\end{corollary}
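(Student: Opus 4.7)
The plan is to mimic the proof of Corollary \ref{Corollary-Odd} with the parameter choice $\nu=m$, $\mu=m+1$ coming from Theorem \ref{Theorem-Even}(i), supplemented by a symmetry argument specific to the even-degree case in part (ii).

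For (i), I would track degrees exactly as in Corollary \ref{Corollary-Odd}(i). By \eqref{even-sk1}, each $s_k$ is homogeneous of degree $k$ in $a,b,c,d$ with rational coefficients; expanding the determinant \eqref{Fk} then shows that $F_k$ itself is homogeneous of degree $k$ with rational coefficients. Writing $\det\F$ as the signed sum over $\sigma\in S_{\mu}$ of products $\prod_{i=1}^{\mu}F_{\nu+i-\sigma(i)}$, every permutation contributes the same total degree $\mu\nu=m(m+1)$; by the same counting, $\det\F_i$ is homogeneous of degree $m(m+1)+i$, the extra $i$ coming from the replacement of the $i$-th column by $(-F_{\nu+1},\ldots,-F_{\nu+\mu})^{T}$. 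Hence each summand $a^{m+1-i}\det\F_i$ of the polynomial $p$ in \eqref{even-abcd} has total degree $(m+1-i)+m(m+1)+i=(m+1)^{2}=n^{2}/4$, and the rationality of the coefficients is inherited from the $s_k$; this proves (i).

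For (ii), I would count the essentially distinct equations of the form \eqref{even-abcd} produced by relabeling $a,b,c,d$. The sign pattern $(-,-,+,+)$ of \eqref{even} records the partition of the four endpoints into the pairs $\{z:\T_n(z)=-1\}$ and $\{z:\T_n(z)=+1\}$. Replacing $\T_n$ by $-\T_n$ preserves both the quadratic equation \eqref{QuEqn} (since only $\T_n^{2}$ enters) and the $\tn$-tuple property, but swaps these two pairs; hence the intrinsic datum attached to a $\tn$-tuple is the \emph{unordered} $2$-vs-$2$ partition of $\{a,b,c,d\}$, of which there are $\tfrac{1}{2}\binom{4}{2}=3$, namely $\{a,b\}\mid\{c,d\}$, $\{a,c\}\mid\{b,d\}$, $\{a,d\}\mid\{b,c\}$. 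For each such partition an appropriate relabeling of the variables puts the system in the form \eqref{even} and thereby produces an instance of \eqref{even-abcd}; with three of the four points fixed, part (i) bounds the degree of this equation in the remaining variable by $n^{2}/4$, contributing at most $n^{2}/4$ solutions. Summing over the $3$ partitions yields the claimed bound $3n^{2}/4$.

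The only genuine difficulty, and the sole departure from the proof of Corollary \ref{Corollary-Odd}(ii), is the factor-of-$2$ reduction from the $\binom{4}{2}=6$ orderings of the $(-,-,+,+)$ sign pattern to the $3$ unordered partitions. This relies on the $\T_n\mapsto -\T_n$ symmetry being compatible with the equal $x$- and $y$-block sizes $|\{x_j\}|=|\{y_j\}|=m$, a feature available only when $n$ is even; for odd $n$ the block sizes differ by one, no such collapse occurs, and Corollary \ref{Corollary-Odd}(ii) correspondingly counts all $4$ labelings of the asymmetric $(-,-,-,+)$ pattern.
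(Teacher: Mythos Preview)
Your proposal is correct and follows the route the paper intends: the paper omits the proof of this corollary entirely, saying only that it ``runs along the same lines'' as Corollary~\ref{Corollary-Odd}, and your degree-tracking in (i) together with the role-counting in (ii) is precisely that adaptation. The one point on which you go beyond the paper is making explicit \emph{why} the even case yields the factor $3$ rather than $4$; your $\T_n\mapsto -\T_n$ argument is correct, and note that it can equivalently be read off directly from the system~\eqref{even} itself, which is invariant under the simultaneous swap $(a,b)\leftrightarrow(c,d)$, $(x_j)\leftrightarrow(y_j)$---this is perhaps closer in spirit to the paper's phrase ``by the special form of equation~\eqref{even}''.
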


\section{Addenda}

\subsection{Equations for Small Degrees}


By Theorems\,\ref{Theorem-Odd} and \ref{Theorem-Even}, the computation of a $\tn$-tuple $\{a,b,c,d\}$ and the extremal points $x_{i}$ and $y_{i}$ of the corresponding polynomial $\T_{n}$ can be managed in $4$ steps:
\begin{enumerate}
\item[1.] Given $3$ of the $4$ points $a,b,c,d$, compute the $4^{\text{th}}$ point by equation \eqref{odd-abcd} and \eqref{even-abcd}, respectively.
\item[2.] Compute the $y_{j}$ by equation \eqref{odd-y} and \eqref{even-y}, respectively.
\item[3.] Compute the $x_{j}$ by equation \eqref{odd-x} and \eqref{even-x}, respectively.
\item[4.] Compute the corresponding polynomial $\T_{n}$ by Lemma\,1.
\end{enumerate}
In the following, we give the equations for $a,b,c,d$, for the $y_{j}$, and for the $x_{j}$, in case of the simplest degrees $n=2,3,4$. For greater degrees, the equations get very bulky.
\begin{itemize}
\item $n=2$:
\[
a+b-c-d=0
\]
\item $n=3$:
\[
a^2-2ab+b^2-2ac-2bc+c^2+2ad+2bd+2cd-3d^2=0
\]
\[
x(-4a+4b+4c-4d)+(a^2+2ab-3b^2+2ac-2bc-3c^2-2ad+2bd+2cd+d^2)=0
\]
\item $n=4$:
\begin{align*}
&a^4+4a^3b-10a^2b^2+4ab^3+b^4-4a^3c+4a^2bc+4ab^2c-4b^3c+6a^2c^2\\
&-4abc^2+6b^2c^2-4ac^3-4bc^3+c^4-4a^3d+4a^2bd+4ab^2d-4b^3d-4a^2cd\\
&-8abcd-4b^2cd+4ac^2d+4bc^2d+4c^3d+6a^2d^2-4abd^2+6b^2d^2\\
&+4acd^2+4bcd^2-10c^2d^2-4ad^3-4bd^3+4cd^3+d^4=0
\end{align*}
\begin{align*}
&y(-2a^2+4ab-2b^2+4ac+4bc-2c^2-4ad-4bd-4cd+6d^2)\\
&+(a^3-a^2b-ab^2+b^3-a^2c+2abc-b^2c-ac^2-bc^2+c^3+a^2d\\
&-2abd+b^2d- 2acd- 2bcd+c^2d+3ad^2+3bd^2+3cd^2-5d^3)=0
\end{align*}
\begin{align*}
&x(-2a^2-4ab+6b^2+4ac-4bc-2c^2+4ad-4bd+4cd-2d^2)\\
&+(a^3+a^2b+3ab^2-5b^3-a^2c-2abc+3b^2c-ac^2+bc^2+c^3\\
&-a^2d-2abd+ 3 b^2d+2acd-2bcd-c^2d-ad^2+bd^2-cd^2+d^3)=0
\end{align*}
\end{itemize}

\subsection{Special Case: Zolotarev Polynomial}

A special case for which the inverse polynomial image consists of two arcs is the so-called Zolotarev polynomial, which has also applications in signal processing \cite{Unbehauen}. Given $\sigma>0$, the Zolotarev polynomial $Z_{n}(x)$ solves the following approximation problem \cite[Addendum\,E]{AchieserBook}:
\[
\min_{a_{j}\in\C}\max_{x\in[-1,1]}\bigl|x^{n}-n\sigma{x}^{n-1}+a_{n-2}x^{n-2}+\ldots+a_{1}x+a_{0}\bigr|
=\max_{x\in[-1,1]}\bigl|Z_{n}(x)\bigr|=:L_{n}
\]
It is well known that, for $\sigma\leq\tan^{2}\frac{\pi}{2n}$, the Zolotarev polynomial $Z_{n}(x)$ is simply a suitable linear transformed classical Chebyshev polynomial $T_{n}(x)$ and, for $\sigma>\tan^{2}(\frac{\pi}{2n})$, the inverse image of $Z_{n}(x)$ consists of the two arcs $[-1,1]\cup[\alpha,\beta]$, where $1<\alpha<\beta$ and $Z_{n}$ has $n$ and $2$ extremal points in $[-1,1]$ and $[\alpha,\beta]$, respectively. The connection of the parameter $\sigma$ with the extremal points of $Z_{n}$ follows from the well known theorem of Vieta applied to the polynomial $Z_{n}(x)+L_{n}$, which in our notation (put $a=\alpha$, $b=1$, $c=-1$, $d=\beta$) reads as follows:
\begin{align}
2\sum_{j=1}^{m-1}y_{j}+\alpha&=n\sigma \qquad (n=2m+1) \label{Vieta_odd}\\
2\sum_{j=1}^{m}y_{j}+\alpha+1&=n\sigma \qquad (n=2m+2)
\label{Vieta_even}
\end{align}
We summarize the results in the following corollary.


\begin{corollary}
Given $n\in\N$ and $\sigma>\tan^{2}(\frac{\pi}{2n})$, the two endpoints $\alpha$ and $\beta$ of the inverse polynomial image of the corresponding Zolotarev polynomial can be computed by the following two polynomial equations:
\begin{enumerate}
\item $n=2m+1$:
\begin{align}
\beta^{m+1}\det\F+\beta^{m}\det\F_{1}+\beta^{m-1}\det\F_{2}+\ldots+\beta\det\F_{m}+\det\F_{m+1}&=0 \label{Zol_odd1}\\
-2\det\F_{1}+(\alpha-n\sigma)\det\F&=0 \label{Zol_odd2}
\end{align}
where  $F_{k}$, $\F$, $\F_{i}$ is defined in \eqref{Fk}, \eqref{FF}, \eqref{FFi}, respectively, and
\[
\begin{array}{lll}
\nu=m-1,\mu=m+1,\quad&s_{k}:=\tfrac{1}{2}(-\alpha^{k}-1-(-1)^{k}-\beta^{k})\quad&\text{in}\quad\eqref{Zol_odd1}\\
\nu=m+1,\mu=m-1,&s_{k}:=\tfrac{1}{2}(\alpha^{k}+1+(-1)^{k}+\beta^{k})&\text{in}\quad\eqref{Zol_odd2}
\end{array}
\]
\item $n=2m+2$:
\begin{align}
\alpha^{m+1}\det\F+\alpha^{m}\det\F_{1}+\alpha^{m-1}\det\F_{2}+\ldots+\alpha\det\F_{m}+\det\F_{m+1}&=0 \label{Zol_even1}\\
-2\det\F_{1}+(\alpha+1-n\sigma)\det\F&=0 \label{Zol_even2}
\end{align}
where  $F_{k}$, $\F$, $\F_{i}$ is defined in \eqref{Fk}, \eqref{FF}, \eqref{FFi}, respectively, and
\[
\begin{array}{lll}
\nu=m,\mu=m+1,\quad&s_{k}:=\tfrac{1}{2}(-\alpha^{k}+1-(-1)^{k}-\beta^{k})\quad&\text{in}\quad\eqref{Zol_even1}\\
\nu=m+1,\mu=m,&s_{k}:=\tfrac{1}{2}(\alpha^{k}+1+(-1)^{k}-\beta^{k})&\text{in}\quad\eqref{Zol_even2}
\end{array}
\]
\end{enumerate}
\end{corollary}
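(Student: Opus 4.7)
The plan is to specialize Theorems \ref{Theorem-Odd} and \ref{Theorem-Even} to the quadruple $(a,b,c,d)=(\alpha,1,-1,\beta)$ forced by the Zolotarev hypothesis, and then convert the Vieta relations \eqref{Vieta_odd}--\eqref{Vieta_even} into determinant form. Since, for $\sigma>\tan^{2}(\pi/(2n))$, the inverse image of $Z_{n}$ is the union of the two distinct arcs $[-1,1]\cup[\alpha,\beta]$ with $1<\alpha<\beta$, this quadruple is a $\tn$-tuple. A direct count of the extremal points of $Z_{n}$ (namely $n$ on $[-1,1]$ and $2$ on $[\alpha,\beta]$, yielding exactly $m$ interior points of each sign $\pm L_{n}$) shows that in the even case $Z_{n}/L_{n}$ corresponds to system \eqref{even} rather than \eqref{even2}; in particular, the quadruple is not of the exceptional $\mathbb{T}_{n/2}$-tuple type, so Theorems \ref{Theorem-Odd}(i) and \ref{Theorem-Even}(i) both apply.

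First, I would substitute $(a,b,c,d)=(\alpha,1,-1,\beta)$ into \eqref{odd-abcd} and \eqref{even-abcd}. Under this substitution, the definitions \eqref{odd-sk1} and \eqref{even-sk1} specialize precisely to the $s_{k}$-patterns printed in the corollary for \eqref{Zol_odd1} and \eqref{Zol_even1}, so both equations follow at once from Theorems \ref{Theorem-Odd}(i) and \ref{Theorem-Even}(i).

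Second, for \eqref{Zol_odd2} and \eqref{Zol_even2}, I would apply Vieta's formula to the polynomials provided by Theorem \ref{Theorem-Odd}(ii) and Theorem \ref{Theorem-Even}(ii); these are constructed from the $s_{k}$-patterns \eqref{odd-sk2} and \eqref{even-sk2}, which under our substitution match the remaining formulas listed in the corollary. Since the leading coefficient of each such polynomial is $\det\F\neq 0$ and the second coefficient is $\det\F_{1}$,
\[
\sum_{j} y_{j}\;=\;-\frac{\det\F_{1}}{\det\F}.
\]
Inserting this into \eqref{Vieta_odd} or \eqref{Vieta_even} and clearing denominators by multiplying through by $\det\F$ produces \eqref{Zol_odd2} or \eqref{Zol_even2}.

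The only real difficulty is bookkeeping: one must verify that the four sign-patterns printed in the two $s_{k}$-arrays of the corollary match the four specializations of \eqref{odd-sk1}, \eqref{odd-sk2}, \eqref{even-sk1}, \eqref{even-sk2} under the identification $(a,b,c,d)=(\alpha,1,-1,\beta)$, and one must confirm the extremal-point count that rules out the \eqref{even2}-scenario in the even case. Once these routine checks are in place, the corollary is an immediate consequence of Theorems \ref{Theorem-Odd} and \ref{Theorem-Even} together with a single application of Vieta's formula.
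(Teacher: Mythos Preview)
Your proposal is correct and follows essentially the same route as the paper: equations \eqref{Zol_odd1} and \eqref{Zol_even1} by specializing Theorems~\ref{Theorem-Odd}(i) and \ref{Theorem-Even}(i) to $(a,b,c,d)=(\alpha,1,-1,\beta)$, and equations \eqref{Zol_odd2} and \eqref{Zol_even2} by combining the Vieta relations \eqref{Vieta_odd}--\eqref{Vieta_even} with the root-sum formula $\sum_j y_j=-\det\F_1/\det\F$ from Theorems~\ref{Theorem-Odd}(ii) and \ref{Theorem-Even}(ii). Your explicit remark ruling out the \eqref{even2}-scenario in the even case is a helpful addition that the paper leaves implicit.
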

\begin{proof}
Equation \eqref{Zol_odd1} and \eqref{Zol_even1} follow immediately by Theorem\,\ref{Theorem-Odd}(i) and Theorem\,\ref{Theorem-Even}(i), respectively. Equation \eqref{Zol_odd2} and \eqref{Zol_even2} follow from \eqref{Vieta_odd} and \eqref{Vieta_even} together with Theorem\,\ref{Theorem-Odd}(ii) and Theorem\,\ref{Theorem-Even}(ii), respectively, using Vieta's root theorem.
\end{proof}


\begin{remark}\hfill{}
\begin{enumerate}
\item Let $n\in\N$ and $\sigma>\tan^{2}(\frac{\pi}{2n})$. With the equations \eqref{Zol_odd1},\eqref{Zol_odd2}, and \eqref{Zol_even1},\eqref{Zol_even2}, respectively, one can determine $\alpha,\beta$ uniquely such that $1<\alpha<\beta$.
\item Once $\alpha,\beta$ are determined, by Theorem\,\ref{Theorem-Odd}(ii),(iii) and Theorem\,\ref{Theorem-Even}(ii),(iii), respectively, one can easily compute the extremal points $x_{j}$ and $y_{j}$ (which are all in $[-1,1]$).
\item With $\alpha$, $\beta$, the $x_{j}$, and the $y_{j}$, the corresponding Zolotarev polynomial ist given, for $n=2m+1$ odd,
\begin{equation}
\begin{aligned}
Z_{n}(x)&=(x-\beta)\prod_{j=1}^{m}(x-x_{j})^{2}
+\tfrac{1}{2}(\beta-\alpha)\prod_{j=1}^{m}(\alpha-x_{j})^{2}\\
&=(x-\alpha)(x^{2}-1)\prod_{j=1}^{m-1}(x-y_{j})^{2}
-\tfrac{1}{2}(\beta-\alpha)(\beta^{2}-1)\prod_{j=1}^{m-1}(\beta-y_{j})^{2},
\end{aligned}
\end{equation}
and, for $n=2m+2$ even,
\begin{equation}
\begin{aligned}
Z_{n}(x)&=(x+1)(x-\beta)\prod_{j=1}^{m}(x-x_{j})^{2}+\tfrac{1}{2}(\alpha+1)(\beta-\alpha)\prod_{j=1}^{m}(\alpha-x_{j})^{2}\\
&=(x-1)(x-\alpha)(x^{2}-1)\prod_{j=1}^{m-1}(x-y_{j})^{2}\\
&\qquad-\tfrac{1}{2}(\beta-1)(\beta-\alpha)(\beta^{2}-1)\prod_{j=1}^{m-1}(\beta-y_{j})^{2}.
\end{aligned}
\end{equation}
\item For a different approach to an algebraic solution of the Zolotarev problem, see \cite{Malyshev2002,Malyshev2003}.
\end{enumerate}
\end{remark}


\bibliographystyle{amsplain}

\bibliography{AlgebraicSolution}

\end{document}